\theoremstyle{proclaim}
\newtheorem{theorem}{Theorem}[section]
\newtheorem{lemma}[theorem]{Lemma}
\newtheorem{corollary}[theorem]{Corollary}
\theoremstyle{fancyproclaim}
\theoremstyle{statement}
\newtheorem{remark}[theorem]{Remark}
\newtheorem{definition}[theorem]{Definition}
\theoremstyle{fancystatement}
\numberwithin{equation}{section}
\providecommand{\AMS}{$\mathcal{A}$\kern-.1667em%
	\lower.25em\hbox{$\mathcal{M}$}\kern-.125em$\mathcal{S}$}
\begin{document}
	
		\title[$k-$smoothness on polyhedral Banach spaces ]{ $k-$smoothness on polyhedral Banach spaces}
		\author[ Subhrajit Dey, Arpita Mal and Kallol Paul]{ Subhrajit Dey, Arpita Mal and Kallol Paul}

		\newcommand{\acr}{\newline\indent}

		\address[Dey]{Department of Mathematics\\ Muralidhar Girls' College\\ Kolkata 700029\\ West Bengal\\ INDIA}
		\email{subhrajitdeyjumath@gmail.com}
		
			\address[Mal]{Department of Mathematics\\ Jadavpur University\\ Kolkata 700032\\ West Bengal\\ INDIA}
		\email{arpitamalju@gmail.com}

		\address[Paul]{Department of Mathematics\\ Jadavpur University\\ Kolkata 700032\\ West Bengal\\ INDIA}
		\email{kalloldada@gmail.com}

		\thanks{The research of  Arpita Mal is supported by UGC, Govt. of India.  The research of Prof. Kallol Paul  is supported by project MATRICS (MTR/2017/000059)  of DST, Govt. of India. }

		\subjclass[2010]{Primary 46B20, Secondary 47L05}
		\keywords{$k-$smoothness; linear operator;  Banach space; polyhedral Banach space}

		\maketitle
		\begin{abstract}
			We characterize $k-$smoothness of an element on the unit sphere of a finite-dimensional polyhedral Banach space. Then we study $k-$smoothness of an operator $T \in \mathbb{L}(\ell_{\infty}^n,\mathbb{Y}),$ where $\mathbb{Y}$ is a two-dimensional Banach space with the additional condition that $T$ attains norm at each extreme point of $B_{\ell_{\infty}^{n}}.$ We also characterize $k-$smoothness of an operator defined between $\ell_{\infty}^3$ and $\ell_{1}^3.$
		\end{abstract}

		\section{Introduction}
		The study of $k-$smoothness plays an important role to identify the structure of the unit ball of a Banach space. The papers \cite{H,Ha,KS,LR} contain the study of $k-$smooth points of many of the Banach spaces. There are several papers including \cite{H,KS,LR,MDP,MP,MPD,Wa} that contain the study of $k-$smoothness of operators on different spaces. In \cite{MPD}, authors have obtained a relation between $k-$smoothness and extreme points of the unit ball of a polyhedral Banach space. The purpose of this paper is to characterize the order of smoothness of an element on the unit sphere of a finite-dimensional polyhedral Banach space, we also study $k$-smoothness of an operator defined between polyhedral Banach spaces. Let us  first fix the  notation and terminology.
		
		Letters $\mathbb{X},$ $\mathbb{Y}$ denote Banach spaces. Throughout the paper we assume the Banach spaces to be real. We denote the unit ball and the unit sphere of $\mathbb{X}$ respectively by $B_\mathbb{X}$ and $S_\mathbb{X}$, i.e., $B_{\mathbb{X}}=\{x\in \mathbb{X}:\|x\|\leq 1\},S_\mathbb{X}=\{x\in \mathbb{X}:\|x\|= 1\}.$ Let $\mathbb{L}(\mathbb{X},\mathbb{Y})$ denote the space of all bounded linear operators between  $\mathbb{X}$ and  $\mathbb{Y}.$  For $T\in \mathbb{L}(\mathbb{X},\mathbb{Y}),~M_T$ denotes the collection of all unit vectors of $\mathbb{X}$ at which $T$ attains its norm, i.e., $M_T=\{x\in S_\mathbb{X}:\|Tx\|=\|T\|\}$. For a set $A,$ the cardinality of $A$ is denoted by $|A|.$ The dual space of $\mathbb{X}$ is denoted by $\mathbb{X}^*.$ An element $x \in S_{\mathbb{X}}$ is said to be an extreme point of the convex set $ B_{\mathbb{X}}$ if and only if $ x = (1-t)y + tz$ for some $ y,z \in B_{\mathbb{X}}$ and $ t \in (0,1) $ implies that $y=z=x.$ For $x,y\in \mathbb{X},$ let  $L[x,y]=\{tx+(1-t)y:0\leq t\leq 1\}$ and $L(x,y)=\{tx+(1-t)y:0< t< 1\}.$ The set of all extreme points of  $ B_{\mathbb{X}}$ is denoted by $Ext(B_\mathbb{X}).$  An element $x^*\in S_{\mathbb{X}^*}$ is  said to be a supporting linear functional of $x\in S_{\mathbb{X}},$ if $x^*(x)=1.$ For a unit vector $x,$ let $J(x)$ denote the set of all supporting linear functionals of $x,$ i.e., $J(x)=\{x^*\in S_{\mathbb{X}^*}:x^*(x)=1\}.$ The set $J(x)$ for $x \in S_{\mathbb{X}}$ plays a significant role to study the $k-$smoothness. 
		By the Hahn-Banach Theorem, it is easy to verify that $J(x)\neq \emptyset,$ for all $x\in S_\mathbb{X}.$ We would like to mention that $J(x)$ is a weak*-compact convex subset of $S_{\mathbb{X}^*}.$  A unit vector $x$ is said to be a smooth point if $J(x)$ is singleton.  $\mathbb{X}$ is said to be a smooth Banach space if every unit vector of $\mathbb{X}$ is smooth.  The set of all extreme points of $J(x)$ is denoted by $Ext~J(x),$ where $x\in S_\mathbb{X}.$ In 2005, Khalil and Saleh \cite{KS} defined $k-$smooth points as follows: An element $x\in S_{\mathbb{X}}$ is said to be $k-$smooth or the order of smoothness of $x$ is $k,$ if $J(x)$ contains exactly $k$ linearly independent supporting linear functionals of $x$. In other words, $x$ is $k-$smooth, if $\dim~span~J(x)=k.$ Moreover, from \cite[Prop. 2.1]{LR}, we get that $x$ is $k-$smooth, if $k=\dim~span~Ext~J(x).$   Similarly, $T\in \mathbb{L}(\mathbb{X},\mathbb{Y})$ is said to be $k-$smooth operator, if $k=\dim~span~J(T)=\dim~span~Ext~J(T).$ Observe that, $1-$smooth points of $S_\mathbb{X}$ are  the smooth points of $S_\mathbb{X}.$    The spaces that we are dealing with in this paper  are mostly finite-dimensional polyhedral Banach spaces, so it is appropriate to  state the following definitions.
		
		\begin{definition}
			A polyhedron $P$ is a non-empty compact subset of $\mathbb{X}$ which is the intersection of finitely many closed half-spaces of $\mathbb{X},$ i.e., $P=\cap_{i=1}^{r}M_i,$ where $M_i$ are closed half-spaces in $\mathbb{X}$ and $r\in \mathbb{N}.$ The dimension $\dim(P)$ of the polyhedron $P$ is defined as the dimension of the subspace generated by the differences $v-w$ of vectors $v,w\in P.$
		\end{definition}
		
		\begin{definition}
			A polyhedron $Q$ is said to be a face of the polyhedron $P$ if either $Q=P$ or if we can write $Q=P\cap \delta M,$ where $M$ is a closed half-space in $\mathbb{X}$ containing $P$ and $\delta M$ denotes the boundary of $M.$ If $\dim(Q)=i,$ then $Q$ is called an $i-$face of $P$. If $\dim(P)=n,$ then $(n-1)-$faces of $P$ are called facets of $P$ and $1-$faces of $P$ are called edges of $P.$
		\end{definition}     
	
	\begin{definition}
		 A finite-dimensional Banach space $\mathbb{X}$ is said to be polyhedral  if the unit ball $B_\mathbb{X}$ of $\mathbb{X}$ contains only finitely many extreme points.  Equivalently, a finite-dimensional Banach space $\mathbb{X}$ is a polyhedral Banach space, if  $B_\mathbb{X}$ is a polyhedron.  In particular, a two-dimensional polyhedral Banach space is said to be a polygonal Banach space. 
	\end{definition}

For a convex set $C,~\text{int}_r(C)$ denotes the relative interior of the set $C,$ i.e., $x\in \text{int}_r(C)$ if there exists $\epsilon>0$ such that $B(x,\epsilon)\cap \text{affine}(C)\subseteq C,$ where $\text{affine}(C)$ is the intersection of all affine sets containing $C$ and an affine set is defined as the translation of a vector subspace. A non-empty convex subset $F$ of $C$ is said to be a face of $C,$ if for $x,y\in C$ and $t\in (0,1),$ $(1-t)x+ty\in F\Rightarrow x,y\in F.$
		\smallskip
		
		In this paper, we first prove that a point on the relative interior of an $i$-face of the unit ball of an $n-$dimensional polyhedral Banach space is $(n-i)-$smooth. In \cite{MP}, the authors completely characterized the $k-$smoothness of an operator defined between two Banach spaces $\mathbb{X}$ and $\mathbb{Y},$ where $\dim(\mathbb{X})=\dim(\mathbb{Y})=2$ and in \cite{MDP}, the authors characterized the $k-$smoothness of a bounded linear operator defined between $\ell_{\infty}^{3}$ and a two-dimensional Banach space. We continue our study in this direction and characterize the $k-$smoothness of a bounded linear operator defined between $\ell_{\infty}^{n}$ and a two-dimensional Banach space with the assumption that the linear operator attains its norm at all the extreme points of the unit ball of $\ell_{\infty}^{n}.$ Then we characterize $k-$smoothness of a bounded linear operator defined between  $\ell_{\infty}^{3}$ and $\ell_{1}^{3}.$ 
		
		We state the following lemma \cite[Lemma 3.1]{W}, characterizing $Ext~J(T),$ which will be used often. For simplicity we state the lemma for finite-dimensional Banach spaces.
		\begin{lemma}\cite[Lemma 3.1]{W}\label{lemma-wojcik}
			Suppose that $\mathbb{X},\mathbb{Y}$ are finite-dimensional Banach spaces.  Let $T\in \mathbb{L}(\mathbb{X},\mathbb{Y})$ and $\|T\|=1$ Then 
			\[Ext ~J(T)=\{y^*\otimes x\in \mathbb{L}(\mathbb{X},\mathbb{Y})^*:x\in M_T\cap Ext(B_{\mathbb{X}}), y^*\in Ext ~J(Tx)\},\]
			where  $y^*\otimes x: \mathbb{L}(\mathbb{X},\mathbb{Y})\to \mathbb{R}$ is defined by $y^*\otimes x(S)=y^*(Sx)$ for every $S\in \mathbb{L}(\mathbb{X},\mathbb{Y}).$
		\end{lemma}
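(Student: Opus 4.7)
The plan is to prove the two inclusions of the claimed equality separately. The forward (``easy'') inclusion is a routine verification, while the reverse inclusion requires representing every element of $J(T)$ as a convex combination of rank-one functionals of the desired form, and then exploiting the extreme-point hypothesis to collapse the representation.

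For the forward inclusion, fix $x \in M_T \cap Ext(B_\mathbb{X})$ and $y^* \in Ext~J(Tx)$, and set $\varphi = y^* \otimes x$. The estimate $|\varphi(S)| = |y^*(Sx)| \le \|y^*\|\,\|Sx\| \le \|S\|$ gives $\|\varphi\| \le 1$, while $\varphi(T) = y^*(Tx) = 1$ (using $\|Tx\| = \|T\| = 1$ and $y^* \in J(Tx)$), so $\varphi \in J(T)$. To prove extremity, suppose $\varphi = \tfrac12(\phi_1 + \phi_2)$ with $\phi_i \in J(T)$. Invoking the structural decomposition described below, each $\phi_i$ is a convex combination of functionals $\psi \otimes z$ with $z \in M_T$ and $\psi \in J(Tz)$. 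Looking at the ``$\mathbb{X}$-support'' of both sides, the extremity of $x$ inside $B_\mathbb{X}$ forces each such $z$ to equal $x$, and then the extremity of $y^*$ inside $J(Tx)$ forces $\phi_1 = \phi_2 = \varphi$.

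For the reverse inclusion, take $\phi \in Ext~J(T)$. Using the finite-dimensionality of $\mathbb{L}(\mathbb{X},\mathbb{Y})^*$, one obtains a representation $\phi = \sum_{i=1}^{r} \lambda_i\, y_i^* \otimes z_i$ with $\lambda_i > 0$, $\sum_i \lambda_i = 1$, $z_i \in S_\mathbb{X}$ and $y_i^* \in S_{\mathbb{Y}^*}$, via a Carath\'eodory-type argument applied to the extreme points of the unit ball of $\mathbb{L}(\mathbb{X},\mathbb{Y})^*$. Evaluating at $T$ gives $1 = \phi(T) = \sum_i \lambda_i\, y_i^*(Tz_i)$, and because each summand satisfies $|y_i^*(Tz_i)| \le 1$, equality must hold termwise, forcing $z_i \in M_T$ and $y_i^* \in J(Tz_i)$ for every $i$. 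The extremity of $\phi$ in $J(T)$ then collapses the sum to a single term $\phi = y^* \otimes x$. A final two-step extremity check pushes this property onto each factor: any convex decomposition $x = \tfrac12(u+v)$ in $B_\mathbb{X}$ would produce $\phi = \tfrac12(y^* \otimes u + y^* \otimes v)$ inside $J(T)$ (after verifying that each summand is in $J(T)$), so extremity of $\phi$ forces $x \in Ext(B_\mathbb{X})$; an analogous argument inside $J(Tx)$ yields $y^* \in Ext~J(Tx)$.

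The principal obstacle is establishing the structural identity $J(T) = \overline{\mathrm{conv}}\{y^* \otimes z : z \in M_T,\; y^* \in J(Tz)\}$, which is the input used in both directions. Equivalently, one must show that every functional on $\mathbb{L}(\mathbb{X},\mathbb{Y})$ of norm one that attains its value $1$ at $T$ can be written as a convex average of ``rank-one'' functionals, each attaining $1$ at $T$ through its associated pair $(z,y^*)$. Once this representation is secured, the remainder of the argument is routine extreme-point bookkeeping, with the two extremity hypotheses on $x$ and $y^*$ playing complementary roles.
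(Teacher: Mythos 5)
This lemma is not proved in the paper at all: it is quoted verbatim from \cite[Lemma 3.1]{W}, so there is no in-paper argument to compare yours against, and I can only judge your proposal on its own terms. Your architecture is the standard one and is essentially sound, but it contains one genuine gap and one step too vague to check. The gap is the one you yourself flag: everything rests on the identity $J(T)=\mathrm{conv}\{y^*\otimes z: z\in M_T,\ y^*\in J(Tz)\}$, or more primitively on $B_{\mathbb{L}(\mathbb{X},\mathbb{Y})^*}=\mathrm{conv}(D_0)$ where $D_0=\{y^*\otimes z: z\in B_{\mathbb{X}},\ y^*\in B_{\mathbb{Y}^*}\}$, and you do not establish it. Calling it ``the principal obstacle'' does not discharge it; it is the entire content of the lemma, and your appeal to ``extreme points of the unit ball of $\mathbb{L}(\mathbb{X},\mathbb{Y})^*$ plus Carath\'eodory'' is circular, since identifying those extreme points already presupposes the decomposition. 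Fortunately, in finite dimensions the fact is a short consequence of the bipolar theorem: $\|S\|=\sup\{|(y^*\otimes z)(S)|: y^*\otimes z\in D_0\}$ says $B_{\mathbb{L}(\mathbb{X},\mathbb{Y})}$ is the polar of $D_0$, hence $B_{\mathbb{L}(\mathbb{X},\mathbb{Y})^*}$ is the bipolar, i.e.\ the closed convex hull of the compact symmetric set $D_0$, which equals $\mathrm{conv}(D_0)$ by Carath\'eodory. With that in hand your reverse inclusion is correct: the termwise-equality argument at $T$ forces $z_i\in M_T$ and $y_i^*\in J(Tz_i)$ (after normalising the tensors to unit vectors, which costs nothing because the resulting weights still sum to $1$), extremality of $\phi$ collapses the sum, and the final upgrades to $x\in Ext(B_{\mathbb{X}})$ and $y^*\in Ext\,J(Tx)$ go through because $y^*\otimes u=y^*\otimes v$ forces $u=v$ when $y^*\neq 0$.

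The second weak point is the forward inclusion: ``looking at the $\mathbb{X}$-support of both sides'' is not an argument. To make it one, write $y^*\otimes x=\sum_k\lambda_k\,\psi_k\otimes z_k$ with $z_k\in M_T$ and $\psi_k\in J(Tz_k)$, choose $y_0\in S_{\mathbb{Y}}$ with $y^*(y_0)=1$, and evaluate both sides at the rank-one operators $f\otimes y_0$ for all $f\in\mathbb{X}^*$; this yields $x=\sum_k\lambda_k\,\psi_k(y_0)\,z_k$ with $|\psi_k(y_0)|\leq 1$ and $\|z_k\|=1$, so extremality of $x$ in $B_{\mathbb{X}}$ gives $\psi_k(y_0)z_k=x$ and hence $z_k=\pm x$. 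The sign case $z_k=-x$ must be handled explicitly by replacing $(\psi_k,z_k)$ with $(-\psi_k,x)$, after which cancelling $x$ reduces the identity to $y^*=\sum_k\lambda_k\psi_k'$ with $\psi_k'\in J(Tx)$, and extremality of $y^*$ in $J(Tx)$ finishes. (An alternative organisation notes that $J(T)$ is an exposed face of $B_{\mathbb{L}(\mathbb{X},\mathbb{Y})^*}$ and $J(Tx)$ an exposed face of $B_{\mathbb{Y}^*}$, so $Ext\,J(T)=J(T)\cap Ext(B_{\mathbb{L}(\mathbb{X},\mathbb{Y})^*})$; but describing $Ext(B_{\mathbb{L}(\mathbb{X},\mathbb{Y})^*})$ is itself a Ruess--Stegall-type theorem, so your decomposition route is the more economical one.) In short: correct skeleton, but the load-bearing duality identity must actually be proved, and the forward-direction extremity check must be written out, signs included.
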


		\section{Main results}
		We begin this section with a relation between the order of smoothness of a unit vector $x$ in a polyhedral Banach space and the dimension of the face $F$ such that $x$ is in the relative interior of $F.$ 

		\begin{theorem}\label{th-poly}
			Let $\mathbb{X}$ be an $n$-dimensional polyhedral Banach space. Let $F$ be an $i$-face of $B_{\mathbb{X}}.$ Let $x \in \text{int}_r(F).$ Then $x$ is $(n-i)-$smooth.
		\end{theorem}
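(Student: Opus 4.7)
The plan is to establish both $\dim\mathrm{span}\,J(x)\le n-i$ and $\dim\mathrm{span}\,J(x)\ge n-i$, using the polyhedral structure of $B_{\mathbb{X}}$ together with the relative-interior hypothesis on $x$.

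For the upper bound, I would fix an arbitrary $x^*\in J(x)$ and any $y\in F$. Since $x\in\text{int}_r(F)$, the point $z=(1+t)x-ty$ lies in $F$ for small $t>0$, and writing $x=\tfrac{1}{1+t}z+\tfrac{t}{1+t}y$ together with $x^*(x)=1$ and $x^*(y),x^*(z)\le 1$ forces $x^*(y)=1$. Hence $x^*\equiv 1$ on $F$, and therefore on $\text{affine}(F)$. Setting $V=\text{affine}(F)-x$, an $i$-dimensional linear subspace of $\mathbb{X}$, every $x^*\in J(x)$ annihilates $V$, so $J(x)$ is contained in the annihilator $V^{\circ}$, which has dimension $n-i$.

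For the lower bound, I would use the polyhedral structure to exhibit $n-i$ linearly independent functionals in $J(x)$. Let $G_1,\dots,G_m$ be the facets of $B_{\mathbb{X}}$ that contain $F$; each $G_j$ carries a canonical supporting functional $g_j^*\in S_{\mathbb{X}^*}$ with $g_j^*|_{G_j}\equiv 1$ and $g_j^*|_{B_{\mathbb{X}}}\le 1$, so $g_j^*\in J(x)$ for every $j$. Writing $H_j=\{y\in\mathbb{X}:g_j^*(y)=1\}$, the key sub-claim is $\text{affine}(F)=\bigcap_{j=1}^m H_j$. The inclusion $\subseteq$ is immediate. For the reverse inclusion, I would argue by contradiction: if $v\in\bigcap_{j}\ker g_j^*$ did not lie in $V$, then because $x\in\text{int}_r(F)$ the point $x$ cannot lie on any facet $G_k$ outside our list (as $x\in G_k$ would place $x$ on the proper face $F\cap G_k$ of $F$), forcing $g_k^*(x)<1$ strictly for every remaining facet. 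By continuity, $x+tv\in B_{\mathbb{X}}$ for small $|t|$, and $g_j^*(x+tv)=1$ for every $j$, so $x+tv\in F$, contradicting $v\notin V$. Thus $\bigcap_{j}\ker g_j^*=V$ is $i$-dimensional, and consequently $\mathrm{span}\{g_1^*,\dots,g_m^*\}$ has dimension $n-i$ in $\mathbb{X}^*$.

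The main technical hurdle is the identification $\text{affine}(F)=\bigcap_j H_j$, which encodes the geometric fact that a point in the relative interior of an $i$-face of a polytope lies on exactly those facets whose supporting hyperplanes contain the entire face $F$. Once this is in hand, the two inequalities combine to give $\dim\mathrm{span}\,J(x)=n-i$, as required.
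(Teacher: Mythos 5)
Your argument is correct in substance, and its lower bound is obtained by a genuinely different route from the paper's. The paper proves the single identity $F=\bigcap_{f\in Ext~J(x)}\{y\in S_{\mathbb{X}}:f(y)=1\}$, establishing the nontrivial inclusion by a contradiction argument inside the two-dimensional section $span\{x,z\}$ (where $x$ is shown to be an extreme point, hence $2$-smooth by an earlier result) together with Singer's lemma extending an extreme functional of the section to an element of $Ext(B_{\mathbb{X}^*})$; the count $k=n-i$ then comes from $\dim F=\dim\bigl(\bigcap_{f}\ker f\bigr)=n-k$. Your upper bound is essentially the same first half of that argument (supporting functionals are identically $1$ on $F$, hence annihilate $V=\text{affine}(F)-x$), repackaged as the cleaner statement $J(x)\subseteq V^{\circ}$ with $\dim V^{\circ}=n-i$. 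For the lower bound you instead exhibit explicit members of $J(x)$ attached to the facets containing $F$, which bypasses $Ext~J(x)$ and the extension lemma entirely and is, if anything, more elementary; the price is that you lean on standard polytope facts (each facet carries a norm-one functional equal to $1$ on it, and $B_{\mathbb{X}}$ is the intersection of its facet half-spaces), which you should cite or verify.

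One step needs repair: the inference ``$g_j^*(x+tv)=1$ for every $j$, so $x+tv\in F$'' is not justified as written, since it presupposes $F\supseteq B_{\mathbb{X}}\cap\bigcap_{j}H_j$, which is essentially the fact you are in the middle of proving. The fix is immediate and does not even need the membership in the hyperplanes at that point: you already have $x+tv\in B_{\mathbb{X}}$ for all small $|t|$, so writing $x=\frac{1}{2}(x+tv)+\frac{1}{2}(x-tv)$ with $x\in F$ and $F$ a face of $B_{\mathbb{X}}$ forces $x\pm tv\in F\subseteq\text{affine}(F)$, whence $v\in V$, the desired contradiction. (You should also note explicitly that a relative interior point of a proper face lies on at least one facet, so the family $G_1,\dots,G_m$ is nonempty.) With these small insertions the proof is complete.
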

		
		\begin{proof}
			Let $f \in Ext~J(x).$ Since $x \in \text{int}_r(F),$ we have for all $y \in F,$ $f(y)=1.$ Therefore,
			 $$F \subseteq \cap_{f \in Ext~J(x)} \{y \in S_{\mathbb{X}} : f(y)=1 \}=A~\text{(say)}.$$ 
			 Clearly, $A$ is a face of $B_{\mathbb{X}}.$ If possible, suppose that $F \subsetneqq A.$ Then there exists $z \in A\setminus F.$ Now, $x \in F \subseteq A$ and $z \in A \Rightarrow tx+(1-t)z \in A$ for all $t \in [0,1],$ since $A$ is a face. Using convexity argument of norm, it is easy to observe that $\|x+\lambda(z-x)\|\geq \|x\|$ for all scalars $\lambda.$ Moreover, we have $\|x + (z-x)\|=\|z\| =1.$ If possible, let $\|x + \lambda_0 (z-x) \|=1$ for some $\lambda_0 < 0.$ Then
			  $$x =tz+(1-t)\{x+\lambda_0(z-x)\},~\text{ where~} t=\frac{-\lambda_0}{1-\lambda_0}\in (0,1).$$
			Since  $F$ is a face and $x \in F,$ we get $ z \in F,$ a contradiction. Thus $\|x + \lambda (z-x)\|>1$ for all $\lambda < 0.$ Let $Y=span\{x, z \}.$ Then $\dim(Y)=2$ and $Y$ is a polygonal Banach space. Using $\|x+(z-x)\|=1$ and $\|x+\lambda(z-x)\|>1$ for all $\lambda<0,$ it is easy to observe that $x \in Ext(B_{Y}).$ Thus, by \cite[Th. 3.5]{MP}, we get $x$ is $2-$smooth in $Y.$ Let $h,g$ be two linearly independent elements of $Ext(B_{Y^*})$ such that $h(x)=g(x)=1.$ If $h(z)=g(z)=1,$ then $h(x-z)=g(x-z)=0 \Rightarrow \ker(h)=\ker(g) \Rightarrow h=\lambda g,$ for some scalar $\lambda,$ a contradiction. Without loss of generality, suppose that $g(z) \neq 1.$ By \cite[Lemma 1.2, Page 168]{S}, there exists $g_1 \in Ext(B_{\mathbb{X}^*})$ such that $g_1|_{Y}=g.$ Now, $g_1(x)=1$ and $g_1 \in Ext(B_{\mathbb{X}^*}) \Rightarrow g_1 \in Ext~J(x).$ Thus, $g_1(z) \neq 1$ contradicts that $z \in A.$ Therefore, $A \setminus F = \emptyset \Rightarrow A=F,$ i.e., 
			$$F=\cap_{f \in Ext~J(x)} \{y \in S_{\mathbb{X}} : f(y)=1 \}=\cap_{f \in Ext~J(x)} \left(x + \ker(f) \right) \cap S_{\mathbb{X}}.$$
			This implies that $i=\dim(F)=\dim(\cap_{f \in Ext~J(x)} ker(f)).$ Now, let $x$ be $k-$smooth. Let $\{f_1, f_2,\dots, f_k \}$ be the set of all linearly independent vectors of $Ext~J(x).$ Then 
			\begin{eqnarray*}
				&& f \in Ext~J(x) \Rightarrow f=\sum_{j=1}^{k} a_{j} f_{j},~(a_j\in \mathbb{R})\\
				&\Rightarrow& \cap_{j=1}^{k} \ker(f_{j}) \subseteq \ker(f)\\
				&\Rightarrow& \cap_{j=1}^{k} \ker(f_{j}) \subseteq \cap_{f \in Ext~J(x)} ker(f) \subseteq \cap_{j=1}^{k} \ker(f_{j})\\
				&\Rightarrow& \cap_{f \in Ext~J(x)} \ker(f) = \cap_{j=1}^{k} \ker(f_{j})\\
				&\Rightarrow& i=\dim(\cap_{j=1}^{k} \ker(f_{j}))=n-k\\
				&\Rightarrow& k=n-i.
			\end{eqnarray*} 
			This completes the proof of the theorem.
		\end{proof}
		
	\begin{remark}
		Note that, if $\mathbb{X}$ is an $n$-dimensional polyhedral Banach space and $F$ is a facet of $B_\mathbb{X},$ then from Theorem \ref{th-poly}, we get for each $x\in int_r(F),$ $x$ is smooth. On the other hand, if $F$ is a $0$-face, i.e., $F=\{x\},$ then $x$ is $n-$smooth. In this case, clearly $x$ is an extreme point of $B_\mathbb{X}.$ It is worth mentioning that Theorem \ref{th-poly} generalizes \cite[Th. 3.5]{MP}.  
	\end{remark}

		Now, we focus on  the space of all operators defined between some particular polyhedral Banach spaces. First we study $k-$smoothness of an operator defined between $\ell_{\infty}^{n}$ and an arbitrary two-dimensional Banach space. To do so we need the following two lemmas. 
		
		\begin{lemma}\cite[Lemma 2.1]{MP}\label{lemma-ind}
			Suppose  $\mathbb{X},\mathbb{Y}$ are finite-dimensional Banach spaces. If $\{x_1,x_2,\ldots,x_m\}$ is a linearly independent subset of $\mathbb{X}$ and $\{y_1^*,y_2^*,\ldots,y_n^*\}$  is a linearly independent subset of $\mathbb{Y}^*$ then $\{y_i^*\otimes x_j:1\leq i\leq n,1\leq j\leq m\}$ is a linearly independent subset of $\mathbb{L}(\mathbb{X},\mathbb{Y})^*.$  	
		\end{lemma}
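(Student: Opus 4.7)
The plan is to test the candidate linear dependence against a well-chosen family of rank-one operators. Suppose $\sum_{i,j} c_{ij}(y_i^* \otimes x_j) = 0$ in $\mathbb{L}(\mathbb{X},\mathbb{Y})^*$; by the definition of $y^*\otimes x$ recorded in Lemma \ref{lemma-wojcik}, this means $\sum_{i,j} c_{ij}\, y_i^*(S x_j) = 0$ for every $S \in \mathbb{L}(\mathbb{X},\mathbb{Y})$. The idea is, for each pair of indices $(p,q)$, to pick an operator $S_{pq}$ whose evaluation at the $x_j$'s collapses the double sum to the single coefficient $c_{qp}$.

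To build such operators, I would first use the linear independence of $\{x_1,\ldots,x_m\}$ in the finite-dimensional space $\mathbb{X}$ to obtain biorthogonal functionals $\widetilde{x}_1,\ldots,\widetilde{x}_m \in \mathbb{X}^*$ with $\widetilde{x}_p(x_j) = \delta_{pj}$ (concretely, extend $\{x_j\}$ to a basis of $\mathbb{X}$ and take the corresponding dual basis). Dually, the linear independence of $\{y_1^*,\ldots,y_n^*\}$ in $\mathbb{Y}^*$ combined with $\dim(\mathbb{Y})<\infty$ yields vectors $\widetilde{y}_1,\ldots,\widetilde{y}_n \in \mathbb{Y}$ with $y_i^*(\widetilde{y}_q) = \delta_{iq}$, since the evaluation map $\mathbb{Y}\to\mathbb{R}^n$ sending $y\mapsto (y_1^*(y),\ldots,y_n^*(y))$ is surjective whenever the $y_i^*$ are linearly independent.

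Now define the rank-one operators $S_{pq}\in\mathbb{L}(\mathbb{X},\mathbb{Y})$ by $S_{pq}(x) = \widetilde{x}_p(x)\,\widetilde{y}_q$. Then $S_{pq}(x_j) = \delta_{pj}\widetilde{y}_q$, so substituting $S = S_{pq}$ into the relation gives
\[
0 = \sum_{i,j} c_{ij}\, y_i^*\bigl(S_{pq}(x_j)\bigr) = \sum_i c_{ip}\, y_i^*(\widetilde{y}_q) = c_{qp}.
\]
Since $(p,q)$ was arbitrary, every $c_{qp}$ vanishes, proving linear independence of the family $\{y_i^*\otimes x_j\}$. There is no real obstacle here; the only bookkeeping is to keep the $\mathbb{X}$-side index and the $\mathbb{Y}$-side index separated, and to notice that the argument uses only biorthogonality to linearly independent systems, never requiring that $\{x_j\}$ or $\{y_i^*\}$ be bases.
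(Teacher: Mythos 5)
Your proof is correct. The paper does not reprove this lemma but simply cites it from \cite{MP}, and your argument---testing a vanishing linear combination against the rank-one operators $S_{pq}(x)=\widetilde{x}_p(x)\,\widetilde{y}_q$ built from biorthogonal systems for $\{x_j\}$ and $\{y_i^*\}$---is the standard proof one would expect there; the two finite-dimensionality hypotheses are used exactly where you use them, to produce the biorthogonal functionals $\widetilde{x}_p$ and the predual vectors $\widetilde{y}_q$.
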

		
		\begin{lemma}\label{lemma-2ngon}
			Let $\mathbb{X}=\ell_{\infty}^{n}$ and $\mathbb{Y}$ be a two-dimensional Banach space.  Let $T \in S_{\mathbb{L}(\mathbb{X}, \mathbb{Y})}$ be such that $Rank(T)=2$ and $Ext(B_\mathbb{X})\subseteq M_T.$ Then the followings hold:\\
			(i) $T(B_{\mathbb{X}})$ is a convex set with $4$ extreme points.\\
			 (ii) If $T(B_\mathbb{X})$ is the convex hull of $\{\pm z_1,\pm z_2\},$ then either for each  $x \in Ext(B_{\mathbb{X}}),$  $Tx\in \pm L[z_1,z_2]$ or for each  $x \in Ext(B_{\mathbb{X}}),$  $Tx\in \pm L[z_1,-z_2].$
		\end{lemma}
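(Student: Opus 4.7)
The plan is to exploit the zonotope description $T(B_{\mathbb{X}}) = \sum_{i=1}^{n} [-v_i, v_i]$, where $v_i := Te_i$, together with the elementary observation that the topological interior of any $2$-dimensional convex subset $C$ of $B_{\mathbb{Y}}$ is contained in the interior of $B_{\mathbb{Y}}$; consequently, every point of such an interior has $\mathbb{Y}$-norm strictly less than $1$. Since $\|T\epsilon\|=1$ for every $\epsilon \in Ext(B_{\mathbb{X}})$ by hypothesis, no $T\epsilon$ can sit in the interior of a $2$-dimensional convex subset of $B_{\mathbb{Y}}$. This one principle drives both parts.

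For (i), I would use the standard fact that a planar zonotope has $2k$ extreme points, where $k$ is the number of distinct directions (up to sign) among its nonzero generators. Since $Rank(T)=2$, one has $k \geq 2$, and the task is to rule out $k \geq 3$. Suppose toward contradiction that $v_a, v_b, v_c$ are pairwise non-parallel. The sub-zonotope $Z_{abc} := [-v_a,v_a]+[-v_b,v_b]+[-v_c,v_c]$ is a hexagon whose six extreme points are exactly six of the eight sign sums $\epsilon_a v_a + \epsilon_b v_b + \epsilon_c v_c$; the remaining two sign patterns form an antipodal pair whose images land in the relative interior of $Z_{abc}$ (they belong to $Z_{abc}$ yet are neither vertices nor on any of the six edges—a direct check). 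For any choice of signs $\eta_i$ at the indices $i \notin \{a,b,c\}$, setting $u := \sum_{i \notin \{a,b,c\}} \eta_i v_i$, the translate $u + Z_{abc}$ is contained in $T(B_{\mathbb{X}}) \subseteq B_{\mathbb{Y}}$. The two bad sign patterns then produce images $T\epsilon$ lying in the interior of $u + Z_{abc}$, hence in the interior of $B_{\mathbb{Y}}$, contradicting $\|T\epsilon\|=1$. Therefore $k=2$, so $T(B_\mathbb{X})$ has precisely four extreme points.

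For (ii), I would partition $\{1,\ldots,n\} = I_0 \sqcup I_1 \sqcup I_2$, where $I_1, I_2$ collect the indices corresponding to the two direction classes and $I_0 = \{i : v_i = 0\}$. Choosing a basis $\{w_1, w_2\}$ of $\mathbb{Y}$ adapted to these directions and writing $v_i = a_i w_1$ for $i \in I_1$, $v_i = b_i w_2$ for $i \in I_2$, the image $T\epsilon$ has $(w_1, w_2)$-coordinates $(\sigma_1(\epsilon), \sigma_2(\epsilon)) = (\sum_{I_1} \epsilon_i a_i, \sum_{I_2} \epsilon_i b_i)$, and $T(B_{\mathbb{X}})$ is the parallelogram with vertices $\pm A w_1 \pm B w_2$, where $A := \sum_{I_1} |a_i|$ and $B := \sum_{I_2} |b_i|$. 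If both $|I_1|, |I_2| \geq 2$, one can choose $\epsilon$ so that $|\sigma_1(\epsilon)| < A$ and $|\sigma_2(\epsilon)| < B$ simultaneously (concretely, flip one sign against the maximizing configuration inside each of $I_1$ and $I_2$), which places $T\epsilon$ in the interior of $T(B_\mathbb{X})$ and contradicts $\|T\epsilon\|=1$. Hence $|I_1| \leq 1$ or $|I_2| \leq 1$. In the case $|I_1|=1$, every image has $w_1$-coordinate $\pm A$, so it lies on the pair of edges of $T(B_\mathbb{X})$ with extremal $w_1$-coordinate; under an appropriate labeling of $z_1, z_2$ these are precisely $\pm L[z_1, z_2]$, and the symmetric case $|I_2|=1$ yields the alternative $\pm L[z_1, -z_2]$.

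The main technical point, on which everything depends, is the combinatorial-geometric claim invoked in (i): that for three pairwise non-parallel planar vectors, exactly two of the eight sign sums fall in the relative interior of the hexagon they generate. This is standard for planar zonotopes, but because the whole argument hinges on it, the actual proof should justify it explicitly, for example by identifying the missing antipodal sign pair and verifying case-by-case that it meets none of the six edges.
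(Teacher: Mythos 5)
Your argument is correct, but it follows a genuinely different route from the paper's. The paper proves (i) by citing \cite[Remark 2.13]{MPD}, and proves (ii) by fixing a coordinate $e_1$ with $Te_1\neq 0$, writing $B_{\ell_\infty^n}$ as the convex hull of the two facets $\pm e_1+F$, showing $T(F)$ has at most four extreme points, and then locating the extreme points of $T(B_{\mathbb{X}})$ through explicit convex combinations such as $Te_1-y_1=\frac{2b}{2b+1}(Te_1-y_2)+\frac{1}{2b+1}(-Te_1-y_1)$, which rule out images landing off the distinguished pair of edges. You instead work from the zonotope representation $T(B_{\mathbb{X}})=\sum_i[-Te_i,Te_i]$ and a single principle (an image of an extreme point cannot lie in the interior of a two-dimensional convex subset of $B_{\mathbb{Y}}$), which lets you prove (i) directly rather than cite it --- three pairwise non-parallel generators would force some $T\epsilon$ into the interior --- and reduces (ii) to counting how many generators fall in each of the two direction classes; the dichotomy $|I_1|=1$ versus $|I_2|=1$ is exactly the either/or of the statement. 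Your version is more uniform and structurally transparent, and it makes the mechanism behind the dichotomy visible (which pair of opposite edges carries the images is determined by which direction class is a singleton); the paper's version avoids any appeal to zonotope vertex-counting and stays with bare convexity estimates. The one obligation you correctly flag is the planar-zonotope fact that exactly one antipodal pair of the eight sign sums of three pairwise non-parallel vectors lies in the interior of the hexagon; this is true and provable by the standard dual-sector argument (a sign pattern $\epsilon$ gives a boundary point iff some nonzero functional $f$ satisfies $\epsilon_if(v_i)\geq 0$ for all $i$, and three distinct lines $f(v_i)=0$ cut the dual plane into only six open sectors), but it must be written out for the proof to be complete.
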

	\begin{proof}
		$(i)$ Follows from \cite[Remark  2.13]{MPD}.\\
		
		$(ii)$ Suppose $e_i=(0,0,\ldots,1,0,\ldots,0)$ with $1$ at the $i-$th coordinate and $0$ at the remaining places. Since, Rank$(T)=2,$ $Te_i \neq 0$ for some $i \in \{1,2,\dots,n \}.$  Without loss of generality, we assume $i=1,$ i.e., $Te_1 \neq 0.$ It is well-known that $B_\mathbb{X}=conv(K\cup-K),$ where $K=\{(1,u_2,\dots,u_n) : |u_i| \leq 1, 2 \leq i \leq n \}.$ Now, $K$ can be expressed as $K=e_1 + F,$ where $e_1=(1,0,\dots,0)$ and $F=\{(0,u_2,\dots,u_n) : |u_i| \leq 1, 2 \leq i \leq n \}.$  Observe that $x\in Ext(B_\mathbb{X})$ if and only if there exists $ u \in Ext(F)$ such that either $x= e_1 + u$ or $x=-e_1+u.$ Clearly, $T(B_{\mathbb{X}})=conv\Big(T(K)\cup T(-K)\Big),$ where $T(K)=T(e_1+F)=Te_1+T(F)$ and $T(-K)=T(-e_1+F)=-Te_1+T(F).$  Now, $T(F)$ must be a symmetric convex set about origin, since $F$ is so. If $T(F)$ has more than four extreme points then proceeding similarly as in \cite[Lemma 2.11]{MPD}, it can be shown that there exists $v \in Ext(B_{\mathbb{X}})$ such that $v \notin M_T,$ a contradiction. Thus, $T(F)$ has at most four extreme points.\\
		First suppose $T(F)$ has only two extreme points say $\pm y,$ i.e., $T(F)$ is the convex hull of $\{y,-y\}.$ Clearly, $T(B_{\mathbb{X}})$ is the convex hull of $\pm Te_1 \pm y.$ Now, for each $x\in Ext(F),Tx\in L[y,-y].$ Therefore, for each $z\in Ext(B_\mathbb{X}),$ $Tz \in \pm L[Te_1+y, Te_1-y]$ and hence we are done.\\
		Next, suppose $T(F)$ has four distinct extreme points say $\pm y_1, \pm y_2.$ We prove the rest of the lemma in the following two steps.\\
		
		\textbf{Step 1.} We claim that either  $T(B_\mathbb{X})=conv\{\pm (Te_1-y_2),\pm (Te_1+y_1)\}$ or $T(B_\mathbb{X})=conv\{\pm (Te_1+y_2),\pm (Te_1+y_1)\}.$
		
		\smallskip
		
		 Since $y_1,y_2\in Ext(T(F)),$ there exist $x_1,x_2\in Ext(F)$ such that $Tx_1=y_1,Tx_2=y_2.$ Now, $T(F)=conv\{\pm y_1,\pm y_2\}$ gives that $T(B_{\mathbb{X}})=conv\{\pm Te_1 \pm y_1, \pm Te_1 \pm y_2 \}.$ Note that $\pm Te_1\pm y_i=\pm Te_1\pm Tx_i,$ for $i=1,2$ and $\pm e_1\pm x_i\in Ext(B_\mathbb{X}).$ Therefore, $\|\pm Te_1\pm y_i\|=1,$ for $i=1,2.$ Since $\mathbb{Y}$ is two-dimensional and $\{y_1+y_2, y_1-y_2 \}$ is linearly independent, $Te_1=a(y_1+y_2)+b(y_1-y_2),$ where $a, b \in \mathbb{R}.$ We assert that either $a=0,b\neq 0$ or $a\neq 0,b=0.$ Clearly, $a$ and $b$ can not be simultaneously zero as $Te_1 \neq 0.$ If possible, suppose that $a\neq 0,b\neq 0.$ If $a > 0, b > 0,$ then 
		$$Te_1-y_1=\frac{2a}{2a+2b+1}(Te_1+y_2)+\frac{2b}{2a+2b+1}(Te_1-y_2)+\frac{1}{2a+2b+1}(-Te_1-y_1)$$
		and
		 $\frac{2a}{2a+2b+1},\frac{2b}{2a+2b+1},\frac{1}{2a+2b+1}\in (0,1).$ Moreover, we have, $\|Te_1+y_2\|=\|Te_1-y_2\|=\|Te_1+y_1\|=1.$ Using this, it can be easily observed that $\|Te_1-Tx_1\|= \|Te_1-y_1\|<1,$  which contradicts that $e_1-x_1\in Ext(B_{\mathbb{X}}) \subseteq M_T.$ Similarly, considering the other possible cases $a<0,b<0$ or $a<0,b>0$ or $a>0,b<0,$ we get a contradiction. This establishes our assertion. Assume that $a=0,b\neq0.$\\
	     Then we have
	     \begin{eqnarray}\label{eq-1}
	     Te_1-y_1=\frac{2b}{2b+1}(Te_1-y_2)+\frac{1}{2b+1}(-Te_1-y_1)~\text{and}
	     \end{eqnarray}
	      \begin{eqnarray}\label{eq-2}
	      -Te_1-y_2=\frac{1}{2b+1}(Te_1-y_2)+\frac{2b}{2b+1}(-Te_1-y_1).
	      \end{eqnarray}
		Thus, the only extreme points of $T(B_{\mathbb{X}})$ are $\pm (Te_1-y_2)$ and $\pm (Te_1+y_1),$ i.e, $T(B_\mathbb{X})=conv\{\pm (Te_1-y_2),\pm (Te_1+y_1)\}.$\\
		Assuming  $a\neq 0,b=0$ and  proceeding similarly, we can show that $T(B_\mathbb{X})=conv\{\pm (Te_1+y_2),\pm (Te_1+y_1)\}.$\\
		
		\textbf{Step 2.} Claim that either $Tz \in \pm L[Te_1+y_1, -Te_1+y_2]$ for each $z \in Ext(B_{\mathbb{X}})$ or $Tz \in \pm L[Te_1+y_1, -Te_1-y_2]$ for each $z \in Ext(B_{\mathbb{X}}).$
		
		\smallskip

		Suppose that  $T(B_\mathbb{X})=conv\{\pm (Te_1-y_2),\pm (Te_1+y_1)\}.$
		Let $z \in Ext(B_{\mathbb{X}})$ be arbitrary.  We  show that $Tz \in \pm L[Te_1+y_1, -Te_1+y_2].$ Now, there exists $x \in Ext(F)$ such that either $z=x + e_1$ or $z=x-e_1.$ First let $z=x+e_1.$ Now, if $Tx$ is an interior point of $T(F),$ then $Tz=Tx + Te_1$ will be an interior point of $T(B_{\mathbb{X}})$ and hence $||Tz|| < ||T||,$ a contradiction as $z \in Ext(B_{\mathbb{X}}).$ Thus, $Tx$ is on the boundary of $T(F),$ i.e., $Tx\in \pm L[y_1,y_2]\cup \pm L[y_1,-y_2].$  If $Tx \in \pm L[y_1, y_2],$ then clearly $Tz=Tx+Te_1 \in \pm L[Te_1+y_1, -Te_1+y_2]$ and we are done.
		
		 If possible, let $Tx \in L(y_1, -y_2),$ i.e., $Tx=(1-\lambda) y_1+ \lambda (-y_2), 0 < \lambda < 1.$  Then by Equation (\ref{eq-2}), we have
		 \begin{eqnarray*}
		 -Tx+Te_1&=&(1-\lambda) (Te_1-y_1)+ \lambda (Te_1+y_2)\\
		 &=&(1-\lambda) (Te_1-y_1)+\lambda \Big(\frac{1}{2b+1}(-Te_1+y_2)+\frac{2b}{2b+1}(Te_1+y_1)\Big)\\
		&=&(1-\lambda) (Te_1-y_1)+ \frac{\lambda}{2b+1}(-Te_1+y_2)+\frac{2b \lambda}{2b+1}(Te_1+y_1).
		 \end{eqnarray*}
		Since $0 < \lambda < 1,$ we have, $1-\lambda,\frac{\lambda}{2b+1},\frac{2b \lambda}{2b+1}\in (0,1).$ Moreover, we have, $\|Te_1-y_1\|=\|-Te_1+y_2\|=\|Te_1+y_1\|=1.$ Using this, it can be easily observed that $\|-Tx+Te_1\|<\|T\|,$ where $-x+e_1 \in Ext(B_{\mathbb{X}}),$ a contradiction.

		Similarly, if $Tx \in -L(y_1, -y_2),$ then we can show that $\|Tz\|=\|Tx+Te_1\|<\|T\|,$ where $z=x+e_1 \in Ext(B_{\mathbb{X}}),$ a contradiction. Therefore, we must have $Tx \in \pm L[y_1, y_2],$ i.e., $Tz=Tx+Te_1 \in \pm L[Te_1+y_1, -Te_1+y_2].$\\
		Now if $z=x-e_1,$ following the same line of arguments, we can show that $Tz \in \pm L[Te_1+y_1, -Te_1+y_2].$\\
		On the other hand, if we assume that $T(B_\mathbb{X})=conv\{\pm (Te_1+y_2),\pm (Te_1+y_1)\},$ then proceeding similarly, we can show that $Tz \in \pm L[Te_1+y_1, -Te_1-y_2]$ for each $z \in Ext(B_{\mathbb{X}}).$
		This completes the proof of the lemma.
	\end{proof}

		The following lemma is needed to prove the desired theorem. The proof of which is trivial and hence we omit the proof.
		
		\begin{lemma}\label{lemma-k-face}
			Any face of $\ell_{\infty}^{n}$ having exactly $2^{k}$ extreme points contains exactly $k+1$ linearly independent extreme points.
		\end{lemma}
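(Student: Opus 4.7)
The plan is to exploit the fact that $B_{\ell_\infty^n}=[-1,1]^n$ is a hypercube, whose faces are obtained by fixing some coordinates at $\pm 1$ and letting the others range freely over $[-1,1]$. Concretely, I would first argue that any face $F$ with exactly $2^k$ extreme points is of the form
\[
F=\{x\in B_{\ell_\infty^n}: x_i=\epsilon_i \text{ for all } i\in I\},
\]
for some $I\subseteq\{1,\ldots,n\}$ with $|I|=n-k$ and signs $\epsilon_i\in\{-1,+1\}$, $i\in I$, and that the extreme points of $F$ are exactly those vectors in $F$ which also take values $\pm 1$ on $J:=\{1,\ldots,n\}\setminus I$. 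This structural reduction turns the lemma into a short linear-algebra exercise.

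Second, I would exhibit $k+1$ linearly independent extreme points explicitly. Let $v_0\in F$ be the extreme point whose $j$-th entry equals $+1$ for every $j\in J$, and for each $j\in J$ let $v_j$ be the extreme point obtained from $v_0$ by flipping only the $j$-th coordinate, so that $v_j-v_0=-2e_j$. The differences $\{v_j-v_0\}_{j\in J}$ form a linearly independent set of cardinality $k$, and $v_0$ does not lie in their linear span since $v_0$ has a nonzero entry $\epsilon_i\in\{-1,+1\}$ in every position $i\in I$ and $I\neq\emptyset$ when $k<n$. Hence $\{v_0\}\cup\{v_j:j\in J\}$ is a linearly independent set of $k+1$ extreme points of $F$.

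For the matching upper bound, I would observe that $F$ is contained in the affine subspace $A=\{x\in\mathbb{R}^n: x_i=\epsilon_i \text{ for all } i\in I\}$ of affine dimension $k$. Since $0\notin A$ (the $\epsilon_i$ are $\pm 1$), the linear span of $A$ has dimension exactly $k+1$, so every collection of extreme points of $F$ sits in a $(k+1)$-dimensional linear subspace of $\mathbb{R}^n$. Consequently no $k+2$ extreme points can be linearly independent, and combining the two bounds gives exactly $k+1$.

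The only point worth flagging as a potential \emph{obstacle} is the degenerate case $k=n$, in which $I=\emptyset$ and the statement as literally written would require $n+1$ linearly independent vectors in $\mathbb{R}^n$; however this case (the full ball $F=B_{\ell_\infty^n}$) is never invoked in the applications of the lemma in the paper, which is consistent with the authors labelling the proof trivial and omitting it. Apart from this caveat, the verification is routine bookkeeping about the face lattice of the cube.
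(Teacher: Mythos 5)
The paper offers no proof of this lemma --- the authors explicitly omit it as trivial --- so there is nothing to compare against; your argument is correct and is the natural one. The structural description of the faces of $[-1,1]^n$, the explicit independent set $\{v_0\}\cup\{v_j: j\in J\}$, and the upper bound via the fact that the affine hull of $F$ misses the origin (so its linear span has dimension $k+1$) are all sound, and your caveat about the degenerate case $k=n$ (where the face is all of $B_{\ell_\infty^n}$ and the statement fails as literally written) is a legitimate observation that does not affect the way the lemma is invoked in the paper.
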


		Now, we are ready to prove our desired result. We completely characterize $k-$smoothness of an operator defined between $\ell_{\infty}^n$ and any two-dimensional Banach space with the condition that the operator attains its norm at each element of $Ext(B_{\ell_\infty^n}).$ We solve the problem in the following two theorems. In the following  theorem, we consider the case in which  image of each extreme point of $B_{\ell_\infty^n}$  is smooth. 
		
		\begin{theorem}\label{th-result 1}
			Let $\mathbb{X}=\ell_{\infty}^{n}$ and $\mathbb{Y}$ be a two-dimensional Banach space. Let $T \in S_{{\mathbb{L}}(\mathbb{X},\mathbb{Y})}$ be such that $Ext(B_{\mathbb{X}}) \subseteq M_T$ and $Tx$ is smooth for all $x \in Ext(B_{\mathbb{X}}).$ Then the followings hold:\\
			(i) If $Rank(T)=1,$ then $T$ is $n-$smooth.\\
			(ii) Let $Rank(T)=2.$ If $Tx$ is an interior point of some line segment of $T(B_{\mathbb{X}})$ for some $x \in Ext(B_{\mathbb{X}}),$ then $T$ is $n-$smooth. Otherwise, $T$ is $(2n-2)-$smooth.
		\end{theorem}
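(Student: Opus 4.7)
The plan is to apply Lemma \ref{lemma-wojcik}: since $Ext(B_{\mathbb{X}}) \subseteq M_T$ and every $Tx$ (with $x \in Ext(B_{\mathbb{X}})$) is smooth, $Ext~J(Tx) = \{y^*_x\}$ is a singleton, so
\[Ext~J(T) = \{y^*_x \otimes x : x \in Ext(B_{\mathbb{X}})\}.\]
The smoothness order of $T$ equals $\dim~span(Ext~J(T))$, and I will compute this dimension in the cases corresponding to $Rank(T)=1$ and $Rank(T)=2$.

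For part (i), the rank-one image $T(B_{\mathbb{X}})$ is a segment $[-z,z]$ with $z \in S_{\mathbb{Y}}$, so each $Tx$ equals $\pm z$. Writing $y^*$ for the unique supporting functional of $z$, we have $y^*_x = \pm y^*$. The identity $(-y^*) \otimes (-x) = y^* \otimes x$ gives $Ext~J(T) \subseteq \{y^* \otimes x : x \in Ext(B_{\mathbb{X}})\}$, whose span is $y^* \otimes \mathbb{X}$ (because $Ext(B_{\mathbb{X}})$ spans $\mathbb{X}$), of dimension $n$.

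For part (ii), I invoke Lemma \ref{lemma-2ngon}: $T(B_{\mathbb{X}}) = conv\{\pm z_1, \pm z_2\}$ and, after reindexing, every $Tx$ lies in $\pm L[z_1,z_2]$. Let $v_i$ denote the unique supporting functional of $z_i$ in $\mathbb{Y}^*$. In Case (ii-a), if some $x_0 \in Ext(B_{\mathbb{X}})$ satisfies $Tx_0 = (1-t)z_1 + tz_2$ with $t \in (0,1)$, then $y^*_{x_0}(Tx_0)=1$ combined with $y^*_{x_0}(z_i) \le 1$ forces $y^*_{x_0}(z_1) = y^*_{x_0}(z_2) = 1$; smoothness of $z_1,z_2$ then yields $y^*_{x_0} = v_1 = v_2$. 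The same convex-combination argument applied to every extreme $x$ shows $y^*_x = \pm v_1$, and the span argument from part (i) gives $n$-smoothness. In Case (ii-b), every $Tx$ is a vertex $\pm z_i$. Partition $Ext(B_{\mathbb{X}}) = E_1 \cup E_2 \cup (-E_1) \cup (-E_2)$ with $E_i = \{x \in Ext(B_{\mathbb{X}}) : Tx = z_i\}$; the identity $(-v_i) \otimes (-x) = v_i \otimes x$ reduces the computation to
\[Ext~J(T) = \{v_1 \otimes x : x \in E_1\} \cup \{v_2 \otimes x : x \in E_2\}.\]
Since $z_i$ is an extreme point of $T(B_{\mathbb{X}})$, the preimage $F_i = T^{-1}(z_i) \cap B_{\mathbb{X}}$ is a face of $B_{\mathbb{X}}$ with $Ext(F_i) = E_i$. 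I will show that $|E_i| = 2^{n-2}$ (so Lemma \ref{lemma-k-face} gives $\dim~span(E_i) = n-1$) and that $v_1, v_2$ are linearly independent; Lemma \ref{lemma-ind} then yields $\dim~span(Ext~J(T)) = (n-1)+(n-1) = 2n-2$.

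The hard part will be Case (ii-b): establishing both $|E_i| = 2^{n-2}$ and the linear independence of $v_1, v_2$. These rely on the detailed structural information from the proof of Lemma \ref{lemma-2ngon}, in particular the explicit representations $z_1 = Te_1 + y_1$ and $z_2 = \pm Te_1 \pm y_2$, together with a careful accounting of which extreme points of $B_{\mathbb{X}}$ map to each vertex. The linear independence of $v_1, v_2$ is geometrically the assertion that the edge $[z_1,z_2]$ of $T(B_{\mathbb{X}})$ does not lie in a flat face of $B_{\mathbb{Y}}$; the norm-attainment condition $Ext(B_{\mathbb{X}}) \subseteq M_T$ combined with the hypothesis of Case (ii-b) is what forces this non-degeneracy.
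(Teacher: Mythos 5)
Your parts (i) and (ii-a) are correct and follow the same route as the paper: reduce everything to $Ext~J(T)$ via Lemma \ref{lemma-wojcik}, note that all the relevant supporting functionals collapse to $\pm y^*$, and get dimension $n$ because $Ext(B_{\mathbb{X}})$ spans $\mathbb{X}$. In case (ii-b) your outline again matches the paper's strategy, but you stop exactly at the two claims that carry all the content. The first of these, $|E_i|=2^{n-2}$, is provable and is what the paper does: $T^{-1}(z_i)\cap B_{\mathbb{X}}$ is a face of the cube, so if $|E_i|>2^{n-2}$ then by Lemma \ref{lemma-k-face} that face has at least $2^{n-1}$ extreme points and hence contains $n$ linearly independent vectors, all mapped to the single point $z_i$, forcing $Rank(T)=1$; since $|E_1|+|E_2|=2^{n-1}$, both sets have exactly $2^{n-2}$ elements and hence $n-1$ independent vectors each.

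The second deferred claim --- that $v_1$ and $v_2$ are linearly independent --- is a genuine gap, and in fact it is \emph{not} forced by the hypotheses of case (ii-b). Take $n=3$, let $\mathbb{Y}$ be the ``stadium'' space whose unit ball is the convex hull of the two Euclidean discs of radius $1$ centred at $(1,0)$ and $(-1,0)$, and let $T(a,b,c)=(b,c)$. Then $\|T\|=1$, all eight extreme points of $B_{\ell_\infty^3}$ lie in $M_T$ and are mapped onto the four vertices $(\pm1,\pm1)$ of the square $T(B_{\mathbb{X}})=[-1,1]^2$; each vertex is a smooth point of $\mathbb{Y}$ (the circular arc and the flat top meet there with the common supporting line $t=1$), no image of an extreme point is interior to an edge of $T(B_{\mathbb{X}})$, and $Rank(T)=2$. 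Yet $J\bigl((1,1)\bigr)=J\bigl((-1,1)\bigr)=\{(0,1)\}$, so $v_1=v_2$ and $\dim~span~Ext~J(T)=3=n$ rather than $2n-2=4$. So the ``non-degeneracy'' you hope to extract from $Ext(B_{\mathbb{X}})\subseteq M_T$ is simply not there: the edge $L[z_1,z_2]$ can be a flat piece of $S_{\mathbb{Y}}$ whose two endpoints are nevertheless smooth. To be fair, the paper's own proof buries this identical step under ``which can be easily verified,'' so you have put your finger on a real defect in the theorem as stated: the correct dichotomy is whether $v_1=\pm v_2$, which is strictly weaker than some $Tx$ being interior to an edge of $T(B_{\mathbb{X}})$. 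Under the additional assumption that $\mathbb{Y}$ is strictly convex (as in the paper's Corollary), a flat segment $L[z_1,z_2]\subseteq S_{\mathbb{Y}}$ is impossible, $v_1$ and $v_2$ are automatically independent, and your plan does go through.
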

		\begin{proof}
			Let us write $Ext(B_{\mathbb{X}})=\{\pm x_1, \pm x_2, \dots, \pm x_{2^{n-1}} \},$ where $\{x_1, x_2, \dots, x_{n} \}$ is linearly independent.\\
			(i) Suppose $Rank(T)=1.$ Then $Tx_i=\pm Tx_1$ for all $2\leq i\leq 2^{n-1}.$ Let $J(Tx_1)=\{y^*\}.$ Then for any $i \in \{1,2,\dots, 2^{n-1} \},$ either $J(Tx_i)= \{y^* \}$ or $J(Tx_i)=\{-y^* \}.$  Now, if $T$ is $k-$smooth, then 
			\begin{eqnarray*}
				k&=& dim ~span ~J(T)\\
				&=& dim~ span~ Ext ~J(T)\\
				&=& dim~ span ~\{y^*\otimes x_i : 1 \leq i \leq 2^{n-1} \}\\
				&=& dim~ span ~\{y^*\otimes x_i : 1 \leq i \leq n \}\\
				&=& n,
			\end{eqnarray*}
			as $\{y^*\otimes x_i : 1 \leq i \leq n \}$ is linearly independent by Lemma \ref{lemma-ind}. Hence, $T$ is $n-$smooth.\\
			
			(ii) Suppose $Rank(T)=2.$ Then by Lemma \ref{lemma-2ngon}, $T(B_\mathbb{X})$ is a convex set with four extreme points. Let $\pm y_1,\pm y_2$ be four distinct extreme points of $T(B_\mathbb{X}).$
			
			\smallskip
			
			  First suppose $Tx$ is an interior point of some line segment of $T(B_{\mathbb{X}})$ for some $x \in Ext(B_{\mathbb{X}}),$ i.e.,  $Tx_j\in L(y_1,y_2)$ for some $1\leq j\leq 2^{n-1}.$ Again by Lemma \ref{lemma-2ngon}, we get $Tx_i\in \pm L[y_1,y_2]$ for all $1\leq i\leq 2^{n-1}.$ Let $J(Tx_j)=\{y^*\}.$ Then it is clear that for any $i \in \{1,2,\dots, 2^{n-1} \},$ either $J(Tx_i)= \{y^* \}$ or $J(Tx_i)=\{-y^* \}.$  Then as in case (i) it is easy to show  that $T$ is $n-$smooth.
			  
			  \smallskip
			
			Next, suppose that  $Tx$ is not an interior point of any line segment of $T(B_{\mathbb{X}})$ for any $x \in Ext(B_{\mathbb{X}}).$ Then $Tx_i\notin L(\pm y_1,\pm y_2)$ for any $1\leq i\leq 2^{n-1}.$ Thus, $Tx_i\in \{\pm y_1,\pm y_2\}$ for all $1\leq i\leq 2^{n-1}.$ Since, $Rank(T)=2,$ without loss of generality, we may assume $Tx_1=y_1,Tx_2=y_2.$  Let $J(Tx_1)=\{z_1^*\}$ and $J(Tx_2)=\{z_2^*\}.$ Then for any $i \in \{1,2,\dots, 2^{n-1} \},$
			\begin{eqnarray*}
				&J(Tx_i)=& \{z_1^* \} ~or~ \{-z_1^* \} ~or~ ~\{z_2^* \}~ or~ \{-z_2^* \}.
			\end{eqnarray*}
			Let $S_1= \{x_i \in Ext(B_{\mathbb{X}}) : Tx_i=Tx_1 \}$ and $S_2= \{x_i \in Ext(B_{\mathbb{X}}) : Tx_i=Tx_2 \}.$ Thus, we have $S_1 \cap S_2 = \emptyset,$ $\pm S_1 \cup \pm S_2 = Ext(B_{\mathbb{X}})$ and $ |S_1 \cup S_2| =|S_1| + |S_2|= 2^{n-1}.$    	
			Therefore, for any $i \in \{1,2,\dots, 2^{n-1} \},$
			\begin{eqnarray*}
				J(Tx_i)&=& \{z_1^* \}, ~\text{if~}x_i \in S_1\\
				&=& \{z_2^* \},~\text{if~} x_i \in S_2.
			\end{eqnarray*}
			Now, it is clear that $S_1$ as well as $S_2$ cannot contain $n$ linearly independent vectors. For otherwise, we get $Rank(T)=1,$ a contradiction. Thus, maximal linearly independent subsets of $S_1$ and $S_2$ contain at most $n-1$ elements. Now, we show that $|S_1|= 2^{n-2}$ and $|S_2|= 2^{n-2}.$ If possible, let $|S_1|< 2^{n-2}.$ Then we must have $|S_2|> 2^{n-2}.$ Observe that $conv(S_2)$ is a face of $B_\mathbb{X}.$ Clearly, $Ext(conv(S_2))=S_2,$ i.e., $|Ext(conv(S_2))|=|S_2|>2^{n-2}.$ Hence, the face $conv(S_2)$ of $B_\mathbb{X}$ contains at least $2^{n-1}$ extreme points and hence by Lemma \ref{lemma-k-face}, $conv(S_2)$ contains at least $n$ linearly independent extreme points. Thus, $S_2$ contains at least $n$ linearly independent vectors. This gives that $Rank(T)=1,$ a contradiction. Therefore,  $|S_1|\geq 2^{n-2}.$ Similarly, $|S_2|\geq 2^{n-2}.$ Thus, $|S_1|=|S_2|= 2^{n-2},$ i.e., $|Ext(conv(S_1))|=|Ext(conv(S_2))|=2^{n-2}.$ Now, by Lemma \ref{lemma-k-face}, $S_1$ and  $S_2$ has exactly $n-1$ linearly independent vectors. Without loss of generality, suppose that the set of all linearly independent vectors of $S_1$ and $S_2$ are $\{u_1,u_2,\ldots,u_{n-1}\}$ and $\{u_n,u_{n+1},\ldots,u_{2n-2}\}$ respectively. Now, if $T$ is $k-$smooth, then 
			\begin{eqnarray*}
				k&=& \dim~span~J(T)\\
				&=& \dim~span~Ext ~J(T)\\
				&=& \dim~span~\{z_1^*\otimes u_i, z_2^*\otimes u_j : u_i \in S_1, u_j \in S_2 \}\\
				&=& \dim~span~\{z_1^*\otimes u_i, z_2^*\otimes u_j : 1 \leq i \leq n-1,n\leq j\leq 2n-2 \}\\
				&=&2n-2, \text{which can be easily verified}.
			\end{eqnarray*}
			Therefore, $T$ is $(2n-2)-$smooth. This completes the proof.
		\end{proof}
		
		In addition to Theorem \ref{th-result 1}, if we assume that the range space is strictly convex and smooth, then we obtain the following corollary.
		\begin{corollary}
			Let $\mathbb{X}=\ell_{\infty}^{n}$ and $\mathbb{Y}$ be a two-dimensional strictly convex, smooth Banach space. Let $T \in S_{{\mathbb{L}}(\mathbb{X},\mathbb{Y})}$ be such that $Ext(B_{\mathbb{X}}) \subseteq M_T.$ Then the followings hold:\\
			(i) If $Rank(T)=1,$  then $T$ is $n-$smooth.\\
			(ii)  If $Rank(T)=2,$  then $T$ is $(2n-2)-$smooth.
		\end{corollary}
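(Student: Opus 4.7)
The plan is to derive this corollary directly from Theorem \ref{th-result 1} by verifying that its hypotheses are automatically satisfied under the additional assumptions of strict convexity and smoothness on $\mathbb{Y}$. The only genuine work needed is to rule out the ``interior point on a line segment'' branch of part (ii) of Theorem \ref{th-result 1}.

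For the hypothesis that $Tx$ is smooth for every $x \in Ext(B_{\mathbb{X}})$: since $\mathbb{Y}$ is smooth, \emph{every} unit vector of $\mathbb{Y}$ is smooth, so in particular each $Tx$ (for $x\in Ext(B_\mathbb{X})\subseteq M_T$, so $\|Tx\|=1$) is smooth. Hence the hypotheses of Theorem \ref{th-result 1} are met in both parts, and part (i) of the corollary follows immediately from part (i) of Theorem \ref{th-result 1}.

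For part (ii), I need to show we are always in the ``otherwise'' case of Theorem \ref{th-result 1}(ii), i.e., that no $Tx$ with $x\in Ext(B_\mathbb{X})$ lies in the relative interior of a line segment of $T(B_\mathbb{X})$. By Lemma \ref{lemma-2ngon}(i), $T(B_\mathbb{X})$ has four extreme points, say $\pm y_1, \pm y_2$, and each of these lies on $S_\mathbb{Y}$ (being the image of an extreme point of $B_\mathbb{X}\subseteq M_T$). Suppose, for contradiction, that $Tx_0 = t y_1 + (1-t) y_2$ for some $x_0 \in Ext(B_\mathbb{X})$ and $t\in(0,1)$ (the cases involving $\pm y_1, \pm y_2$ are analogous). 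Then $\|Tx_0\|=1$ since $x_0\in M_T$, while strict convexity of $\mathbb{Y}$ forces $\|ty_1 + (1-t)y_2\| < 1$ because $y_1 \neq y_2$ and both have norm one. This is the desired contradiction, so the ``otherwise'' case applies and Theorem \ref{th-result 1}(ii) yields that $T$ is $(2n-2)$-smooth.

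The argument is essentially a two-line application; the main (mild) obstacle is simply recognizing that the four extreme points $\pm y_1, \pm y_2$ of $T(B_\mathbb{X})$ automatically sit on $S_\mathbb{Y}$ — a fact that uses precisely the assumption $Ext(B_\mathbb{X})\subseteq M_T$ — so that strict convexity can be invoked to exclude any non-trivial convex combination from having norm one.
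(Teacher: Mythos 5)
Your proposal is correct and follows essentially the same route as the paper: part (i) is an immediate application of Theorem \ref{th-result 1}, and for part (ii) the paper likewise invokes Lemma \ref{lemma-2ngon} to get the four extreme points of $T(B_{\mathbb{X}})$ and rules out the ``interior point of a line segment'' case by contradiction with strict convexity, since such a point would force a nontrivial convex combination of two distinct unit vectors to have norm one.
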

		\begin{proof}
			$(i)$ follows clearly from Theorem \ref{th-result 1}. We only prove $(ii)$. Let $Rank(T)=2.$ Then by Lemma \ref{lemma-2ngon}, $T(B_\mathbb{X})$ is a convex set with four extreme points. Without loss of generality, let $\pm Tx_1,\pm Tx_2$ be four distinct extreme points of $T(B_\mathbb{X}).$ If possible, suppose that there exists $x\in Ext(B_{\mathbb{X}})$ such that $Tx$ is an interior point of some line segment of $T(B_\mathbb{X}).$ Suppose that $Tx\in L(Tx_1,Tx_2).$ Since $x\in M_T,\|Tx\|=1.$ Thus, it is clear that $\|y\|=1 $ for all $y\in L[Tx_1,Tx_2],$ i.e., $ L[Tx_1,Tx_2]\subseteq S_{\mathbb{Y}}.$ This contradicts that $\mathbb{Y}$ is strictly convex. Therefore,  there does not exist any $x\in Ext(B_{\mathbb{X}})$ such that $Tx$ is an interior point of some line segment of $T(B_\mathbb{X}).$ Hence, from Theorem \ref{th-result 1}, we conclude that if $Rank(T)=2,$  then  $T$ is $(2n-2)-$smooth.
		\end{proof}

		In the next theorem, we consider the remaining case in which the  image of at least one extreme point of $B_{\ell_\infty^n}$  is not a smooth point. Note  that in a two-dimensional Banach space, a non-zero vector is either smooth or $2-$smooth.

		\begin{theorem}\label{th-result 2}
			Let $\mathbb{X}=\ell_{\infty}^{n}$ and $\mathbb{Y}$ be a two-dimensional Banach space. Let $T \in S_{\mathbb{L}(\mathbb{X},\mathbb{Y})}$ be such that $Ext(B_{\mathbb{X}}) \subseteq M_T.$ Let $S=\{x\in Ext(B_\mathbb{X}):Tx~\text{is~not~smooth} \}$ be non-empty and $S_1$ be the subset of $S$ containing all linearly independent vectors of $S.$ If $|S_1|=k,$ then $T$ is $(n+k)-$smooth.
		\end{theorem}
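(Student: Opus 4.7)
The plan is to split on $Rank(T)$, use Lemma \ref{lemma-2ngon} to pin down the image geometry when $Rank(T)=2$, and compute $\dim~span~Ext~J(T)$ by expressing it as a sum of tensor-product subspaces inside $\mathbb{L}(\mathbb{X},\mathbb{Y})^{*}$ and applying the subspace intersection identity $(A_1\otimes B_1)\cap (A_2\otimes B_2)=(A_1\cap A_2)\otimes(B_1\cap B_2)$.

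If $Rank(T)=1$, then $Tx_i=\pm Tx_1$ for every extreme $x_i$, so non-emptiness of $S$ forces $Tx_1$ to be non-smooth; hence $S=Ext(B_\mathbb{X})$ and $k=n$. Writing $Ext~J(Tx_1)=\{\alpha^{*},\beta^{*}\}$, Lemma \ref{lemma-wojcik} together with Lemma \ref{lemma-ind} produces $2n$ linearly independent elements $\alpha^{*}\otimes x_i,\beta^{*}\otimes x_i\in Ext~J(T)$ (for $n$ linearly independent extreme points $x_i$), showing $T$ is $(2n)$-smooth $=(n+k)$-smooth.

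For $Rank(T)=2$, Lemma \ref{lemma-2ngon} gives $T(B_\mathbb{X})=\textup{conv}\{\pm u,\pm v\}$ with every $Tx_i\in L[u,v]\cup L[-u,-v]$ after relabeling. A key preliminary observation is that any $Tx_i$ lying in the relative interior of $L[u,v]$ would force $L[u,v]\subseteq S_\mathbb{Y}$ (since $\|Tx_i\|=1$ and $B_\mathbb{Y}$ is convex), and hence interior points of that edge are smooth. Consequently $S\subseteq\{x\in Ext(B_\mathbb{X}):Tx\in\{\pm u,\pm v\}\}$. I would then partition the ``positive half'' $\{x\in Ext(B_\mathbb{X}):Tx\in L[u,v]\}$ into $\mathcal{E}_u,\mathcal{E}_v,\mathcal{E}_{uv}^{\circ}$ according as $Tx=u$, $Tx=v$, or $Tx$ is interior, with linear spans $W_u,W_v,W_{uv}\subseteq\mathbb{X}$. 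Since this positive half is the extreme-point set of a facet of $B_{\ell_\infty^n}$, it spans $\mathbb{X}$, so $W_u+W_v+W_{uv}=\mathbb{X}$.

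Using the antipodal symmetry and Lemma \ref{lemma-wojcik},
\[ span~Ext~J(T)=\mathbb{Y}_u^{*}\otimes W_u+\mathbb{Y}_v^{*}\otimes W_v+\langle\gamma_{int}^{*}\rangle\otimes W_{uv}, \]
where $\mathbb{Y}_{\bullet}^{*}:=span~Ext~J(\bullet)$ is the whole of $\mathbb{Y}^{*}$ when $\bullet$ is non-smooth and is one-dimensional otherwise, and $\gamma_{int}^{*}$ (whenever $W_{uv}\neq 0$) denotes the common supporting functional of $L[u,v]$, which lies in $\mathbb{Y}_u^{*}\cap\mathbb{Y}_v^{*}$. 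Depending on which of $u,v$ are non-smooth --- only $u$, only $v$, or both --- one reads off $k$ as $\dim W_u$, $\dim W_v$, or $\dim(W_u+W_v)$, respectively. A routine inclusion-exclusion application of the tensor intersection identity, combined with $W_u+W_v+W_{uv}=\mathbb{X}$, then collapses each sub-case to $\dim~span~Ext~J(T)=n+k$. The main obstacle is a clean verification of the tensor intersection identity inside $\mathbb{L}(\mathbb{X},\mathbb{Y})^{*}$ and the uniform handling of the three non-smoothness sub-cases; the edge-interior observation is the essential geometric input that confines $S$ to pre-images of the four vertices $\pm u,\pm v$.
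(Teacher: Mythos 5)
Your proposal is correct and follows the same skeleton as the paper's proof: the rank dichotomy, Lemma \ref{lemma-2ngon} to reduce the rank-two case to a quadrilateral image, the observation that an extreme point mapped to the relative interior of an edge has a unique supporting functional (so $S$ sits over the four vertices), and then a dimension count via Lemmas \ref{lemma-wojcik} and \ref{lemma-ind}. Where you genuinely differ is in the bookkeeping of that final count. The paper splits into explicit sub-cases ($S=Ext(B_\mathbb{X})$ versus $S\subsetneqq Ext(B_\mathbb{X})$; both vertices non-smooth versus exactly one), chooses concrete extreme functionals, and uses the relation $z_2^*\in span\{y^*,z_1^*\}$ to reduce everything to a single independent family of tensors of the right size. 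You instead write $span~Ext~J(T)$ as a sum of tensor-product subspaces $\mathbb{Y}^*_u\otimes W_u+\mathbb{Y}^*_v\otimes W_v+\langle\gamma^*_{int}\rangle\otimes W_{uv}$ and compute its dimension by inclusion--exclusion with the identity $(A_1\otimes B_1)\cap(A_2\otimes B_2)=(A_1\cap A_2)\otimes(B_1\cap B_2)$; combined with $W_u+W_v+W_{uv}=\mathbb{X}$ and $\dim(A\cap B)=\dim A+\dim B-\dim(A+B)$, each sub-case does collapse to $n+k$, as I have checked. This buys a more uniform treatment of the sub-cases at the cost of having to prove the intersection identity (true, via a basis of $\mathbb{X}$ simultaneously extending bases of $W_u\cap(W_v+W_{uv})$, $W_u$ and $W_v+W_{uv}$, and similarly in $\mathbb{Y}^*$). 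Two small remarks: you do not need the ``positive half is a facet'' claim --- since the negative half is the antipode of the positive half, the positive half spans $\mathbb{X}$ simply because $Ext(B_\mathbb{X})$ does; and in the sub-case where only one vertex is non-smooth and $W_{uv}=0$, no relation between $\gamma^*_v$ and $J(u)$ is needed, the inclusion--exclusion already gives $n+\dim W_u$.
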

		\begin{proof}
			Let $Ext(B_\mathbb{X})=\{\pm x_1,\pm x_2,\ldots,\pm x_{2^{n-1}}\}.$ First suppose that $Rank(T)=1.$ Since $S\neq\emptyset,$ assume $x_1\in S.$ Then for all $2\leq i\leq 2^{n-1},$ $Tx_i=\pm Tx_1.$ Let $Ext~J(Tx_1)=\{z_1^*,z_2^*\}$ for some $z_1^*,z_2^*\in S_{\mathbb{Y}^*}.$ Then either $Ext~J(Tx_i)=\{z_1^*,z_2^*\}$ or  $Ext~J(Tx_i)=\{-z_1^*,-z_2^*\}$ for all $2\leq i\leq 2^{n-1}.$ Clearly, $|S_1|=n.$  Now,
			\begin{eqnarray*}
				&& \dim ~span~Ext~J(T)\\
				&=& \dim ~span ~\{z_1^*\otimes x_i,z_2^*\otimes x_i:1\leq i\leq 2^{n-1} \}\\
				&=& \dim ~span ~\{z_1^*\otimes x_i,z_2^*\otimes x_i:1\leq i\leq n \}\\
				&=& 2n,~\text{using ~Lemma \ref{lemma-ind}}.
			\end{eqnarray*}
			Thus, in this case $T$ is $2n-$smooth and we are done. \\
			Next, suppose that $Rank(T)=2.$ Then by Lemma \ref{lemma-2ngon}, $T(B_\mathbb{X})$ is a convex set with four extreme points. Without loss of generality, let $\pm y_1,\pm y_2$ be four distinct extreme points of $T(B_\mathbb{X}).$  We consider the following  two cases:\\
		\textbf{Case I :} $S=Ext(B_\mathbb{X}).$\\
			Clearly, in this case $|S_1|=n.$ Let $S_1=\{x_1,x_2,\ldots,x_n\}.$ Observe that if $Tx_i\in L(\pm y_1,\pm y_2),$ for any $1\leq i\leq 2^{n-1},$ then $Tx_i$ will be smooth, which contradicts that $S= Ext(B_\mathbb{X}).$ Thus, $Tx_i\in \{\pm y_1,\pm y_2\}$ for all $1\leq i\leq 2^{n-1}.$ Since $Rank(T)=2,$ $Tx_i=y_1,Tx_j=y_2$ for some $1\leq i\neq j\leq 2^{n-1}.$ Therefore, $y_1,y_2$ are not smooth. Suppose that $Ext~J(y_1)=\{z_1^*,z_2^*\}$ and $Ext~J(y_2)=\{z_3^*,z_4^*\}.$ Then for each $1\leq i\leq 2^{n-1},$ $Ext~J(Tx_i)$ is either $\{z_1^*,z_2^*\}$ or $\{-z_1^*,-z_2^*\}$ or $\{z_3^*,z_4^*\}$ or $\{-z_3^*,-z_4^*\}.$ Observe that $z_3^*,z_4^*\in span\{z_1^*,z_2^*\},$ since $\dim(\mathbb{Y})=2.$  Hence, for any $x\in \mathbb{X},z_3^*\otimes x,z_4^*\otimes x\in span~\{z_1^*\otimes x,z_2^*\otimes x\}.$ Therefore, 
			\begin{eqnarray*}
				&& \dim ~span~Ext~J(T)\\
				&=& \dim ~span ~\{z_1^*\otimes x_i,z_2^*\otimes x_i:1\leq i\leq 2^{n-1} \}\\
				&=& \dim ~span ~\{z_1^*\otimes x_i,z_2^*\otimes x_i:1\leq i\leq n \}\\
				&=& 2n.
			\end{eqnarray*}
			Thus, in this case $T$ is $2n-$smooth and we are done. \\
			\textbf{Case II :}  $S\subsetneqq Ext(B_\mathbb{X}).$\\
			Without loss of generality, we may assume that $S_1=\{x_1,x_2,\ldots,x_k\}$ and $S=\{\pm x_1,\pm x_2,\ldots,\pm x_k,\pm x_{k+1},\ldots,\pm x_m\},$ $(1\leq m< 2^{n-1},1\leq k\leq n).$ As in Case I, $Tx_i\in \{\pm y_1,\pm y_2\}$ for all $1\leq i\leq m.$ Clearly, at least one of $y_1,y_2$ is not smooth. First we assume both of $y_1,y_2$ are not smooth. Using Lemma \ref{lemma-2ngon}, we get either $Tx_i\in \pm L[ y_1, y_2]$ for each $m < i\leq 2^{n-1}$ or $Tx_i\in \pm L[- y_1, y_2]$ for each $m <  i\leq 2^{n-1}.$ Without loss of generality, assume that $Tx_i\in \pm L[ y_1, y_2]$ for each $m <  i\leq 2^{n-1}$. Since, $y_1,y_2$ are not smooth and $Tx_i$ are smooth, $Tx_i\notin \{\pm y_1,\pm y_2\}$ for $m <  i\leq 2^{n-2}.$ Therefore, $Tx_i\in \pm L(y_1, y_2)$ for each $m <  i\leq 2^{n-1}$.  Now, it is easy to observe that either $J(Tx_i)=\{y^*\}$ or $J(Tx_i)=\{-y^*\}$ for each $m < i\leq 2^{n-1}.$ Suppose $Ext~J(y_1)=\{y^*,z_1^*\}$ and $Ext~J(y_2)=\{y^*,z_2^*\}.$ Then for each $1\leq i\leq m,$ $Ext~J(Tx_i)$ is either $\{y^*,z_1^*\}$ or $\{-y^*,-z_1^*\}$ or $\{y^*,z_2^*\}$ or $\{-y^*,-z_2^*\}.$ Observe that $z_2^*\in span\{y^*,z_1^*\},$ since $\dim(\mathbb{Y})=2.$ Hence, for any $x\in \mathbb{X},z_2^*\otimes x\in span~\{y^*\otimes x,z_1^*\otimes x\}.$ Therefore,
			\begin{eqnarray*}
				&& \dim ~span~Ext~J(T)\\
				&=& \dim ~span ~\{y^*\otimes x_i,z_1^*\otimes x_j:1\leq i\leq 2^{n-1},1\leq j\leq m \}\\
				&=& \dim ~span ~\{y^*\otimes x_i,z_1^*\otimes x_j:1\leq i\leq 2^{n-1},1\leq j\leq k \}\\
				&=& n+k, ~(\text{by~Lemma~\ref{lemma-ind}}),
			\end{eqnarray*}
			since $\{x_i:1\leq i\leq 2^{n-1}\}$ contains only $n$ linearly independent vectors. Therefore, $T$ is $(n+k)-$smooth. \\
			Now, if we consider exactly one of $y_1,y_2$ is not smooth, then following same line of arguments, we can prove that $T$ is $(n+k)-$smooth.
			This completes the proof of the theorem.		
		\end{proof}
		
		We would like to mention that Theorem \ref{th-result 1} and Theorem \ref{th-result 2} improves on  \cite[Th. 3.10]{MDP}. The study of $k-$smoothness of an operator defined between $ \ell_{\infty}^n$ and $\mathbb{Y}$ becomes more complicated when  $\dim\mathbb{Y} \geq 3.$ 
		The rest of the paper is devoted to the study of $k-$smoothness of an operator defined between two particular spaces   $ \ell_{\infty}^3 $ and $ \ell_1^3.$ We denote the extreme points of $B_{\ell_{\infty}^3}$ by $\pm x_1=\pm (1,1,1), \pm x_2=\pm (-1,1,1), \pm x_3=\pm (-1,-1,1), \pm x_4=\pm (1,-1,1). $  
		$|M_T \cap Ext(B_{\ell_{\infty}^3})|$ plays an important role in determining the order of smoothness of $T.$  Observe that if $|M_T \cap Ext(B_{\ell_{\infty}^3})| \leq 6,$ then the order of smoothness of $T$ can be obtained using \cite[Th. 2.2]{MP}. Therefore, we only consider the case for which $|M_T \cap Ext(B_{\ell_{\infty}^3})|=8,$ i.e., $M_T \cap Ext(B_{\ell_{\infty}^3})=\{\pm x_1, \pm x_2, \pm x_3, \pm x_4 \}.$ Note that, for $1 \leq i \leq 4,$ $Tx_i$ is $k-$smooth, where $k \in \{1,2,3 \}.$ Suppose $S_k=\{x \in M_T \cap Ext(B_{\ell_{\infty}^3)}: Tx~is~ k-smooth \},$ where $k \in \{1,2,3 \}.$ Clearly, $|S_1|+|S_2|+|S_3|=8.$ In the following theorem, we consider the case when $|S_1|=8.$
		
		\begin{theorem}\label{th-001a}
			Let $\mathbb{X}=\ell_{\infty}^3$ and $\mathbb{Y}=\ell_{1}^3.$ Let $T \in S_{\mathbb{L}(\mathbb{X}, \mathbb{Y})}$ be such that $M_T \cap Ext(B_{\mathbb{X}})= \{ \pm x_1, \pm x_2, \pm x_3, \pm x_4 \}.$ Let $|S_1|=8.$ Then the following hold:\\
			$(i)$  If $\pm J(Tx_i)= \pm J(Tx_j)$ for all $x_i,x_j \in S_1$, then $T$ is $3-$smooth.\\
			$(ii)$ Otherwise, $T$ is $4-$smooth.
		\end{theorem}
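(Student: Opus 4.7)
The plan is to invoke Lemma \ref{lemma-wojcik} to describe $Ext~J(T)$ explicitly, then reduce the computation of the order of smoothness to a short linear-algebra question that exploits the dependence structure among $x_1, x_2, x_3, x_4$. Since $|S_1|=8$, each $Tx_i$ is smooth in $\ell_1^3$, so $J(Tx_i)=\{y_i^*\}$ with $y_i^* \in Ext(B_{\ell_\infty^3})$, i.e.~each $y_i^* = (\pm 1, \pm 1, \pm 1)$. Using the identity $(-y_i^*) \otimes (-x_i) = y_i^* \otimes x_i$ as functionals on $\mathbb{L}(\mathbb{X},\mathbb{Y})$, Lemma \ref{lemma-wojcik} yields
\[Ext~J(T) = \{y_i^* \otimes x_i : 1 \leq i \leq 4\},\]
so the order of smoothness of $T$ equals $\dim~span~\{y_i^* \otimes x_i : 1 \leq i \leq 4\}$. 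The structural observation used throughout is that in $\mathbb{R}^3$ the vectors $x_1, x_2, x_3, x_4$ satisfy the single linear relation $x_1 - x_2 + x_3 - x_4 = 0$ (up to scaling), while any three of them are linearly independent.

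For part $(i)$, the condition $\pm J(Tx_i) = \pm J(Tx_j)$ for all $i,j$ is equivalent to $y_i^* = \epsilon_i y_1^*$ with $\epsilon_i \in \{\pm 1\}$. Then each $y_i^* \otimes x_i = y_1^* \otimes (\epsilon_i x_i)$ lies in the three-dimensional subspace $y_1^* \otimes \mathbb{R}^3$, and since any three of the $\epsilon_i x_i$ are linearly independent, Lemma \ref{lemma-ind} shows that the span is exactly three-dimensional. Hence $T$ is $3$-smooth.

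For part $(ii)$, I would show the four tensors are linearly independent. Assume $\sum_{i=1}^4 c_i\, (y_i^* \otimes x_i) = 0$. Viewing these rank-one functionals as matrices $y_i^* x_i^T \in \mathbb{R}^{3\times 3}$, this is equivalent to $\sum_i c_i (x_i \cdot v)\, y_i^* = 0$ in $\mathbb{Y}$ for every $v \in \mathbb{R}^3$. Specializing $v$ to the standard basis vectors $e_1, e_2, e_3$ (whose inner products with each $x_i$ are $\pm 1$) and taking appropriate sums and differences of the resulting three identities produces the chain
\[c_1 y_1^* = -c_2 y_2^* = c_3 y_3^* = -c_4 y_4^*.\]
If any single $c_i = 0$, the nonvanishing of the remaining $y_j^*$'s forces all the other $c_j$ to vanish as well. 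Otherwise all $c_i \neq 0$ and the $y_i^*$ are mutually scalar multiples; since each $y_i^*$ has entries $\pm 1$, this forces $y_i^* = \pm y_1^*$ for every $i$, contradicting the hypothesis of $(ii)$. Hence all $c_i = 0$ and $T$ is $4$-smooth.

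The crux of the argument is the derivation of the chain $c_1 y_1^* = -c_2 y_2^* = c_3 y_3^* = -c_4 y_4^*$; this is a short but slightly intricate computation that encodes the unique dependence $x_1 - x_2 + x_3 - x_4 = 0$ among the extreme points. Once this identity is in hand, the rest is a clean case analysis that uses only the discrete structure of $y_i^* \in \{\pm 1\}^3$.
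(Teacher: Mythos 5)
Your proposal is correct and follows essentially the same route as the paper: both reduce to Lemma \ref{lemma-wojcik}, Lemma \ref{lemma-ind}, and the single dependence $x_4=x_1-x_2+x_3$, with your chain $c_1y_1^*=-c_2y_2^*=c_3y_3^*=-c_4y_4^*$ simply spelling out the linear-independence check that the paper dismisses as ``easy to verify.'' (Only a cosmetic slip: the identity $\sum_i c_i(x_i\cdot v)\,y_i^*=0$ holds in $\mathbb{Y}^*$, not in $\mathbb{Y}$.)
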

		
		\begin{proof}
			$(i)$  Suppose the given condition is satisfied. Let $\pm J(Tx_i)=\{\pm y^*\}$ for $ 1 \leq i \leq 4.$ Now, if $T$ is $k-$smooth, then 
			\begin{eqnarray*}
				k&=& \dim ~span ~J(T)\\
				&=& \dim~ span~ Ext ~J(T)\\
				&=& \dim~ span ~\{y^*\otimes x_1, y^*\otimes x_2, y^*\otimes x_3, y^*\otimes x_4\}\\
				&=& \dim~ span ~\{y^*\otimes x_1, y^*\otimes x_2, y^*\otimes x_3\}\\
				&=& 3, ~(\text{using~ Lemma~\ref{lemma-ind}}).
			\end{eqnarray*}
			Hence, $T$ is $3-$smooth.\\
			$(ii)$  Let $\pm J(Tx_i)=\{\pm y_i^*\}$ for $ 1 \leq i \leq 4.$ Since $(i)$ is not satisfied, without loss of generality, we assume $y_1^*\neq \pm y_2^*,$ i.e., $\{y_1^*,y_2^*\}$ is linearly independent.  Let $y_3^*=ay_1^*+by_2^*$ and $y_4^*=cy_1^*+dy_2^*,$ where $a,b,c,d\in \mathbb{R}.$ Since $\|y_3^*\|=1,$ $a$ and $b$ cannot be zero simultaneously. Similarly, $c$ and $d$ cannot be zero simultaneously.  Now, if $T$ is $k-$smooth, then
			\begin{eqnarray*}
				k&=&\dim ~span ~J(T)\\
				&=& \dim~ span~ Ext ~J(T)\\
				&=& \dim~ span ~\{y_1^*\otimes x_1, y_2^*\otimes x_2, y_3^*\otimes x_3, y_4^*\otimes x_4 \}.
			\end{eqnarray*}
		    Using the relations $x_4=x_1-x_2+x_3,$ $y_3^*=ay_1^*+by_2^*,$ $y_4^*=cy_1^*+dy_2^*$ and Lemma \ref{lemma-ind}, it is easy to verify that  
		 $\{y_1^*\otimes x_1, y_2^*\otimes x_2, y_3^*\otimes x_3, y_4^*\otimes x_4 \}$ is linearly independent. Therefore, $k=4$ and $T$ is $4-$smooth. 
		\end{proof}

	Proceeding similarly we can find the $k-$smoothness of operator $T$ for other feasible cases. We skip the details of proof to avoid monotonicity. 
		The following two tables  illustrates the various possible cases of $k-$smoothness under different conditions on $S_1,S_2$ and $S_3.$ The first table contains the cases when $S_3 = \emptyset ,$ i.e., $Tx_i$ is either $1-$smooth or $2-$smooth  but not $3-$smooth.

		\begin{center}
			\begin{tabular}{|c|c|c|c|c|}
				\hline 
				&                       &                     &                                        &                                         \\
				$|S_1|$ & $|S_2|$ & $|S_3|$ & Further conditions on the operator $T$& $T$ is \\
				                     &                       &                     &                                      &               $k$-smooth \\
				                      &                       &                         &                                     &                with $k=$ \\
				\hline
				\multirow{2}{*}{8}  & \multirow{2}{*}{0}& \multirow{2}{*}{0} & {$\pm J(Tx_i)=\pm J(Tx_j)~\forall~x_i,x_j\in S_1$}&{3} \\ \cline{4-4}\cline{5-5}&&&{Otherwise} & {4}\\
				\hline
				\multirow{2}{*}{6}  & \multirow{2}{*}{2}& \multirow{2}{*}{0} & {$\pm J(Tx_i)=\pm J(Tx_j)~\forall~x_i,x_j\in S_1$ and }&{}\\
				&&& { $\pm J(Tx_i)\subseteq \pm Ext~ J(Tx_k)$,$~\forall x_i\in S_1,x_k\in S_2$}& {4}\\
				 \cline{4-4}\cline{5-5}&&&{Otherwise} & {5}\\
				\hline
				\multirow{2}{*}{4}  & \multirow{2}{*}{4}& \multirow{2}{*}{0} & {$\pm J(Tx_i)=\pm J(Tx_j)~\forall~x_i,x_j\in S_1$ and }&{}\\
				&&& {$\pm J(Tx_i)\subseteq \pm Ext~J(Tx_k)$,$\forall x_i\in S_1, \forall x_k\in S_2$}&{5}\\
				\cline{4-4}\cline{5-5}&&&{Otherwise} & {6}\\
				\hline
				\multirow{2}{*}{2}  & \multirow{2}{*}{6}& \multirow{2}{*}{0} & {$|\cap_{x_k \in S_2}\pm J(Tx_k)|\geq 2$ and }&{}\\
				&&& { $\pm J(Tx_i)\subseteq \pm Ext~J(Tx_k)$,$\forall x_i\in S_1, \forall x_k\in S_2$}&{6}\\
				\cline{4-4}\cline{5-5}&&&{Otherwise} & {7}\\
				\hline
				\multirow{3}{*}{0}  & \multirow{3}{*}{8}& \multirow{3}{*}{0} & {$|\cap_{i=1}^4\pm J(Tx_i)|=4~$}&{6}\\
				\cline{4-4}\cline{5-5}&&&{Either $|\cap_{i=1}^4\pm J(Tx_i)|=2~$ or}&\\
				&&& {$|\pm Ext~J(Tx_i)\cap \pm Ext~J(Tx_j)|\neq 2,$ for $1\leq i\neq j\leq 4$}& {7}\\
				 \cline{4-4}\cline{5-5}&&&{Otherwise} & {8}\\
				 \hline
				\end{tabular}
			\end{center}
		The next table exhibits the cases when $ S_3 \neq \emptyset.$

			\begin{center}
			\begin{tabular}{|c|c|c|c|c|}
				\hline 
				&                       &                     &                                        &                                         \\
				$|S_1|$ & $|S_2|$ & $|S_3|$ & Further conditions on the operator $T$& $T$ is \\
				&                       &                     &                                      &               $k$-smooth \\
				&                       &                         &                                     &                with $k=$ \\
				\hline 
				 \multirow{2}{*}{6}  & \multirow{2}{*}{0}& \multirow{2}{*}{2} & {$\pm J(Tx_i)=\pm J(Tx_j), \forall~x_i,x_j\in S_1$}&{5} \\ \cline{4-4}\cline{5-5}&&&{Otherwise} & {6}\\
				  \hline
				 \multirow{2}{*}{4}  & \multirow{2}{*}{2}& \multirow{2}{*}{2} & {$\pm J(Tx_i)=\pm J(Tx_j), \forall~x_i,x_j\in S_1$ and }&{} \\
				 &&&{$\pm J(Tx_i)\subseteq \pm Ext~J(Tx_j)$, $\forall~x_i\in S_1,x_j\in S_2$}&{6}\\
				  \cline{4-4}\cline{5-5}&&&{Otherwise} & {7}\\
				\hline
				$2$ & $4$ & 2 &  - & 7\\
				\hline
				\multirow{2}{*}{0}  & \multirow{2}{*}{6}& \multirow{2}{*}{2} & {$|\cap_{x_i\in S_2}\pm Ext~ J(Tx_i)|=4$}&{7} \\
				\cline{4-4}\cline{5-5}&&&{Otherwise} & {8}\\
				\hline
				$4 $ & $0$ & $4$ & -   & $7$\\
				\hline
				$0$ & $4$ & $4$ & - & $8$ \\
				\hline
				$0$ & $0$ & $8$ & - & $9$   \\  
				\hline
			\end{tabular}
		\end{center}
	\vspace{.2cm}
		Finally we would like to note that the following possibilities are not feasible:
		(i) $|S_1|=2,|S_2|=2,|S_3|=4,$ (ii) $|S_1|=2,|S_3|=6$ and (ii) $|S_2|=2,|S_3|=6.$

		\bibliographystyle{amsplain}

	\end{document}